\documentclass{amsart}
\usepackage[english]{babel}
\usepackage[utf8]{inputenc}
\usepackage{amsmath, amssymb,epic,graphicx,mathrsfs,enumerate}
\usepackage[all]{xy}
\usepackage{color}
\usepackage{comment}
\usepackage{enumitem}
\usepackage{hyperref}
\usepackage[]{frontespizio}

\usepackage{amsmath}
\usepackage{amsthm}
\usepackage{amssymb}
\usepackage{latexsym}
\usepackage{epsfig}

\DeclareMathOperator{\perm}{Sym}

\DeclareMathOperator{\gl}{GL}

\DeclareMathOperator{\End}{End}

\newtheorem{thm}{Theorem}

\numberwithin{equation}{section}

\renewcommand{\footnote}{\endnote}
\newcommand{\ignore}[1]{}\makeglossary

\begin{document}
	\bibliographystyle{amsplain}
\title[2-covering numbers]{2-covering numbers\\ of some finite solvable groups}

\author{Andrea Lucchini}
\address{Andrea Lucchini\\ Universit\`a di Padova\\  Dipartimento di Matematica \lq\lq Tullio Levi-Civita\rq\rq\\ Via Trieste 63, 35121 Padova, Italy\\email: lucchini@math.unipd.it}

\begin{abstract}
A 2-covering for a finite group $G$ is a set
of proper subgroups of $G$ such that every pair of elements of $G$ is contained
in at least one subgroup in the set. The minimal number of subgroups needed
to 2-cover a group $G$ is called the 2-covering number and denoted by $\sigma_2(G).$ In \cite{gk} it is conjectured that if $G$ is solvable and not 2-generated, then $\sigma_2(G)=1+q+q^2,$ where $q$ is a prime power. We disprove this conjecture.
\end{abstract}

\maketitle

\section{Introduction}
A non-cyclic finite group $G$ is the union of its proper subgroups. 
A set of proper subgroups is a covering for  $G$ if its union is the whole
group. The minimal number of subgroups needed to cover $G$ is called the
covering number of $G$ and is denoted by $\sigma(G).$ The behavior of $\sigma(G)$ has been studied in several articles by numerous authors. We recall in particular that Tomkinson \cite{tom} proved that if $G$ is a solvable non-cyclic finite group, then $\sigma(G) = p^k + 1$ for some prime $p.$

In a recent paper, Stephen M. Gagola III and  Joseph Kirtland \cite{gk}
introduced the the concept of a 2-covering for a group $G,$ which is a set of proper subgroups of $G$ such that every pair of elements of $G$ is contained in at least one subgroup in the set.
A finite group $G$ admits a 2-covering if and only if it is not 2-generated. In this case   $\sigma_2(G)$ denotes
the smallest size of a 2-covering of $G.$ 

In \cite{gk} the authors determine the 2-covering number for finite nilpotent groups and particular
classes of finite solvable groups. They prove that $\sigma_2(G)=1+p+p^2$ if $G$ is a finite $p$-group with $d(G)>2,$ and propose the following conjecture (which is true for example if $G$ is supersolvable):
{\sl{if $G$ is a finite solvable group, with $d(G)\geq 3,$ then
$\sigma_2(G)=p^{2t}+p^t+1,$ for some prime divisor $p$ of the order of $G$ and some positive integer $t$.}} In this short note we disprove this conjecture. Note that $p^{2t}+p^t+1$ is an odd integer for every choice of $p$ and $t$. On the contrary, we will prove that there exist infinitely many even integers that are the 2-covering number of some finite solvable group. In particular the following two results hold.

\begin{thm}\label{uno}For every odd prime $p$, there exists a finite solvable group $G$ with $\sigma_2(G)=1+p^2+p^3+p^4.$
\end{thm}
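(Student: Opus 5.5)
The plan is to realize the number $1+p^2+p^3+p^4=(1+q+q^2)+p^3$, with $q=p^2$, as the 2-covering number of a split extension $G=W\rtimes H$. Here $W=V^n$ is a direct power of a faithful irreducible $H$-module $V$ over $\mathbb F_p$ with $\End_H(V)\cong\mathbb F_{q}$, and $H$ is a small non-abelian solvable group. I would try first $H=\mathbb F_q^*\rtimes\langle\phi\rangle$ or a subgroup of $\Gamma L_1(p^2)$, with $n$ chosen as large as possible subject to $d(G)=3$.

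The maximal subgroups of such a $G$ fall into two classes, which I will use throughout:
\begin{enumerate}
\item the subgroups $WM$, with $M$ maximal in $H$;
\item the subgroups $W_0H^w$, with $W_0$ an $H$-submodule of $W$ of codimension $\dim V$. The submodules $W_0$ correspond to hyperplanes of $\mathbb F_q^n$, and for each one there are $|W/W_0|=q$ conjugates of complements.
\end{enumerate}

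\textbf{Upper bound.} I will exhibit a 2-covering of size $1+p^2+p^3+p^4$. A pair $(x,y)$ either generates a proper subgroup modulo $W$, or it generates $G$ modulo $W$. Pairs of the first kind are covered by a few subgroups $WM$. The choice of $H$ is designed so that one or a small number of such subgroups suffice, and so that the remaining pairs, those projecting onto generating pairs of $H$, are forced into the second class. For those, I will project $(x,y)$ to the pair of $W$-components. A generating pair of $G$ modulo $W$ fails to generate $G$ exactly when the associated vectors in $\mathbb F_q^n$ (after the usual derivation/cocycle identification, $H^1(H,V)$ being controlled) lie in a common hyperplane or lie in a conjugate of a complement. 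I then cover these pairs by the $1+q+q^2$ "hyperplane" subgroups $W_0H$ when $n=3$, plus $p^3$ further subgroups $W_0H^w$ chosen among the conjugates. Together with the $WM$'s this should total exactly $1+p^2+p^3+p^4$; the bookkeeping is to match the extra $p^3$ with the number of complement classes needed.

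\textbf{Lower bound.} This is the main obstacle. I will count pairs. Let $\Omega$ be the set of non-generating pairs whose image in $H$ generates $H$. Subgroups $WM$ meet $\Omega$ trivially, so only the second class matters, and each $W_0H^w$ contains at most a fixed number $|H|^2$-type quantity of pairs of $\Omega$ generating it modulo $W_0$. Dividing $|\Omega|$ by this maximum gives a first bound. Since that bound will likely not be sharp, I will instead argue combinatorially: fix the $H$-part and consider the induced problem of covering pairs of vectors of $\mathbb F_q^3$ (up to the $H$-twist) by hyperplane-plus-translate configurations. The key steps are:
\begin{enumerate}
\item Show that every hyperplane of $\mathbb F_q^3$ must be used, since some pair lies only in subgroups over that hyperplane. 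This gives $1+q+q^2$.
\item Show that the "off-complement" pairs force at least $p^3$ additional conjugates.
\end{enumerate}
I expect the delicate point to be step 2, and I would attack it with a double counting over translates $w\in W/W_0$, using oddness of $p$ to ensure the relevant derivations are not all inner.
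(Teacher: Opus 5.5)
Your proposal has a genuine gap. No group is ever fixed, and the parameters you suggest are inconsistent, so neither bound can actually be carried out. If $|W/W_0|=q=|\End_H(V)|$, then $V$ is one-dimensional over $\mathbb F_q$, so $H\le\mathbb F_q^\times$ is cyclic rather than non-abelian. Conversely, for $H=\mathbb F_{p^2}^\times\rtimes\langle\phi\rangle$ acting on $V=\mathbb F_{p^2}$ one gets $\End_H(V)=\mathbb F_p$, not $\mathbb F_{p^2}$. In this setting $H^1(H,V)=0$: for solvable $H$ and faithful irreducible $V$, $F(H)$ is a $p'$-group with $C_V(F(H))=0$. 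So all derivations are inner, and $V^n\rtimes H$ has exactly $|V|\,(q^n-1)/(q-1)$ maximal subgroups of the form $W_0H^w$, not $q(1+q+q^2)$. Your bookkeeping does not close either. With $q=p^2$, $(1+q+q^2)+p^3$ already equals $1+p^2+p^3+p^4$, which leaves no room for the subgroups $WM$ you also need.

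More fundamentally, selecting only some of the complement-type subgroups cannot work, and the lower bound is not where the difficulty lies. By Gasch\"utz, for $d(H)\le 2$ one has $d(V^t\rtimes H)\le m$ if and only if $t\le r(m-1)$, where $r=\dim_{\End_H(V)}V$. With $n=3$, the condition $d(G)=3$ forces $r=2$, and then each $W_0H^w\cong V^2\rtimes H$ is $2$-generated. A generating pair of such a subgroup lies in no other maximal subgroup, so all $q^2(1+q+q^2)$ of them belong to every $2$-covering. This is why one should take $n=r+1$, the smallest value with $d(G)=3$, not the largest. For $q=p^2$ this forced number already overshoots the target.

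The missing idea is the decomposition $1+p^2(1+p+p^2)$ with $q=p$ and $r=2$. Take $H=Q_8$ acting absolutely irreducibly on $V=\mathbb F_p^2$, which requires $p$ odd, and set $G=V^3\rtimes H$.
\begin{enumerate}
\item The $p^2+p^3+p^4$ subgroups $WH^v$ are forced, by the argument above.
\item Since $d_H(V^3)=2$, some pair in $V^3$ generates $V^3$ as a module. Such a pair lies in no $WH^v$ (as $WH^v\cap V^3=W$), so it forces one subgroup $V^3K$ with $K$ maximal in $H$.
\item Nothing more is needed. Pairs with image $H$ lie in some $WH^v$, and pairs in $V^3$ lie in $V^3K$.
\item In the remaining case the image is a proper, hence cyclic, subgroup of $Q_8$, generated by some $h\neq 1$ acting fixed-point-freely. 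So the subgroup generated by the pair is conjugate to some $\langle h,v\rangle$ with $v\in V^3$. This lies in $WH$ for any maximal submodule $W\ni v$, and such $W$ exists because $d_H(V^3)=2>1$.
\end{enumerate}
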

\begin{thm}\label{due}
	Let $q$ be a prime power and $p$ an odd  prime dividing $q+1$. Then there exists a finite solvable group $G$ with $\sigma_2(G)=q^2+q^3+q^4+p.$
\end{thm}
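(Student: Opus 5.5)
The plan is to give an explicit Frattini-free solvable group $G=N\rtimes H$ and to compute $\sigma_2(G)$ from an upper bound (an explicit 2-covering) and a matching lower bound (counting pairs). Since $p$ is odd and $p\mid q+1$, we have $p\nmid q-1$. Hence a cyclic group $C=\langle x\rangle$ of order $p$ acts fixed-point-freely and irreducibly on $V=\mathbb F_{q^2}$, viewed as $\mathbb F_q^2$, through multiplication by a primitive $p$-th root of unity. This action makes $\mathbb F_q[C]$-submodules of $V^n$ the same as $\mathbb F_{q^2}$-subspaces.

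I would build $N$ from copies of $V$, with possibly one extra layer, and let $H$ contain $C$. The dimensions are to be tuned so that:
\begin{itemize}
\item the maximal subgroups supplementing $N$ are the subgroups $U\rtimes C^v$, with $U$ an $\mathbb F_{q^2}$-hyperplane and $C^v$ a complement modulo $U$;
\item their total number, after discarding redundant ones, is $q^2(q^2+q+1)=q^2+q^3+q^4$;
\item the maximal subgroups containing $N$ form a family in which $p$ of them are needed.
\end{itemize}
I should say plainly that my first trial, $N=V^2$ with $H=C$, gives $d(G)=3$, but its natural covering has size $1+q^2(q^2+1)$, which is the wrong shape. So the module structure will have to be adjusted, probably by an extra quotient on which a group of order $p$ or $p^2$ acts, until the count becomes $q^4+q^3+q^2+p$. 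A key tool is that $H^1(C,V^n)=0$, because the action is fixed-point-free. Therefore every element outside $N$ is conjugate into $C$, and any pair $(x^v,w)$ generates $\langle x^v\rangle\ltimes(\mathbb F_{q^2}w)$. This lets me list the 2-generated subgroups explicitly.

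The steps, in order:
\begin{enumerate}
\item Verify $d(G)\ge 3$, using the $H^1=0$ argument above to show that two elements generate only a rank-one $\mathbb F_{q^2}$-piece of $N$ together with one complement.
\item Classify the maximal subgroups, using standard facts on complements in Frattini-free solvable groups.
\item Exhibit the 2-covering: the $q^2+q^3+q^4$ subgroups of the form (hyperplane)$\,\rtimes\,$(conjugate of $C$), plus $p$ subgroups containing $N$ that handle the pairs lying in, or projecting into, the top quotient.
\item Prove the lower bound.
\end{enumerate}

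The lower bound is the main obstacle. I would split an arbitrary 2-covering $\mathcal M$ into the members containing $N$ and those supplementing $N$. For the supplements, I would count pairs $(x^v,w)$ with $w\in N$ nonzero. Each subgroup $U\rtimes C^v$ covers exactly $|U|$ such pairs for each of its complements. A double-counting argument, in the style of Tomkinson and the $p$-group proof of Gagola--Kirtland, should then force at least $q^2+q^3+q^4$ supplement members.

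Next I would show that pairs generating a subgroup projecting onto a non-cyclic section of $G/N$ cannot be caught by supplements. So at least $p$ members containing $N$ are needed; the top quotient should be chosen so that its own 2-covering, or covering, number is exactly $p$. The delicate point is excluding mixed savings, where a member containing $N$ also covers many of the pairs $(x^v,w)$. I would first try to control this by bounding the number of such pairs covered by any subgroup containing $N$, showing it is at most a $1/p$-fraction, so that no trade-off can reduce the total below $q^2+q^3+q^4+p$.
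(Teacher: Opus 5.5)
You have not specified the group $G$, and the framework you set up cannot reach the target number. Your preliminary facts are correct: $p\nmid q-1$, the cyclic group $C=\langle a\rangle$ of order $p$ acts irreducibly and fixed-point-freely on $V=\mathbb F_{q^2}$, and your count $1+q^2+q^4$ for $V^2\rtimes C$ is right.

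\textbf{Why your framework fails.} As long as $H$ acts $\mathbb F_{q^2}$-linearly on $V$, we have $\End_H(V)=\mathbb F_{q^2}$. The maximal submodules of $N=V^m$ are then $\mathbb F_{q^2}$-hyperplanes, and the complements modulo each one form an $\mathbb F_{q^2}$-space. So the number of supplements $WH^v$ is a polynomial in $q^2$, for instance $q^2(q^{2m}-1)/(q^2-1)$ when $H^1(H,V)=0$. No tuning of dimensions in this set-up creates the term $q^3$.

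\textbf{The missing idea: shrink the endomorphism field.} Adjoin the Frobenius $v\mapsto v^q$ as an involution $b$. It inverts $a$ because $\zeta^q=\zeta^{-1}$, so $H=\langle a,b\rangle\cong D_{2p}$ is $2$-generated, $\End_H(V)=\mathbb F_q$ and $r=\dim_{\mathbb F_q}V=2$. Take $G=V^{r+1}\rtimes H=V^3\rtimes H$. Gasch\"utz's criterion ($d(V^t\rtimes H)\le n$ iff $t\le r(n-1)$) gives $d(G)=3$. The module $V^3$ has $q^2+q+1$ maximal submodules $W$, each giving $q^2$ subgroups $WH^v$, so there are exactly $q^2+q^3+q^4$ supplements.

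\textbf{Where $p$ comes from.} It is not a top quotient with (2-)covering number $p$. It is the number of involutions of $D_{2p}$, i.e.\ of the maximal subgroups $V^3\langle h\rangle$ with $|h|=2$. These are needed because involutions have nonzero fixed points on $V$, so your ``conjugate into a complement'' argument, valid for elements of order $p$, breaks down for them. Your heuristic is also backwards: pairs projecting onto all of $H$ lie in no subgroup containing $N$, so they are exactly the pairs only supplements can cover.

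\textbf{The lower bound needs no double counting.} The ``mixed savings'' issue disappears with one observation. Each $WH^v$ has $W\cong V^2=V^r$, so by Gasch\"utz it is itself $2$-generated. A generating pair of a maximal subgroup $M$ lies in no proper subgroup other than $M$. Hence every $2$-covering contains all $q^2+q^3+q^4$ of these subgroups.

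\textbf{Forcing the $p$ extra members.} For each involution $h$, take $v_1=(u,0,0)$ with $u+u^h\neq 0$. Take $v_2=(0,w_1,w_2)$ with $(w_1,w_2)$ generating $V^2$ as an $H$-module, which is possible since $d_H(V^r)=1$. Then $X=\langle hv_1,v_2\rangle$ contains $(hv_1)^2=(u+u^h,0,0)\neq 0$ and $v_2$, and their normal closure is $V^3$. So $X$ lies in no $WH^v$, and its only maximal overgroup is $V^3\langle h\rangle$. For $q$ odd any $0\neq u\in C_V(h)$ works. For $q$ even you need $u\notin C_V(h)$: otherwise $hv_1$ is an involution conjugate to $h$, and $X$ falls into a supplement.

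\textbf{The upper bound.} These $q^2+q^3+q^4+p$ subgroups form a $2$-covering. Pairs projecting onto $H$ lie in some $WH^v$ because $d(G)=3$. Pairs projecting onto $\langle a\rangle$ do too, because $C_{V^3}(a)=0$ and $d_H(V^3)=2$. All remaining pairs lie in some $V^3\langle h\rangle$.
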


\section{Proofs of Theorems \ref{uno} and \ref{due}}
	Let $H$ be a finite solvable group with $d(H)=2$ and let $V$ be a faithful irreducible $H$-module.
	Moreover, let $\mathbb{F}=\End_H(V),$ $q=|\mathbb{F}|$, $r=\dim_{\mathbb{F}}V$ and consider the semidirect product $G=V^{r+1}\rtimes H.$ 
	It follows from \cite[Satz 4]{ga} that $d(V^t\rtimes H)\leq n$ if and only if $t\leq r(n-1).$ In particular, $d(G)=3.$ The maximal subgroups of $G$ are of two possible types:
	\begin{enumerate}
		\item $M=WH^v,$ where $W$ is a maximal $H$-submodule of $V^{r+1}$ and $v\in V^{r+1}.$
		\item $M=V^{r+1}K$ with $K$ a maximal subgroup of $H.$
	\end{enumerate}
	Notice that the number of maximal subgroups of $G$ of the first type is precisely $$\gamma=q^r\left( \frac{q^{r+1}-1}{q-1}\right)=\frac{q^{2r+1}-q^r}{q-1}.$$ Indeed, the number of maximal $H$-submodules $W$ of the direct power $V^{r+1}$ is $(q^{r+1}-1)/(q-1),$ and, for a given choice of $W$, $WH^{v_1}=WH^{v_2}$ if and only if $v_1-v_2\in W.$

	Now let $\mathcal M$ be a $2$-covering of $G$. It is not restrictive to assume that every element of $\mathcal M$ is a maximal subgroup of $G$.
	If $M$ is a maximal subgroup of $G$ of the first type, then, again by \cite[Satz 4]{ga},
	$d(M)=2$ and this implies that $M$ must belong to $\mathcal{M}.$ To determine the number of maximal subgroups of the second type that $\mathcal M $ must contain, it is necessary to specify in detail the action of $H$ on $V.$ But before doing this, let us add one more piece of general information. For every positive integer $u$, denote by $d_H(V^u)$ the smallest number of elements needed to generate $V^u$ as an $H$-module. Let  $\mathbb K$ be the prime subfield of $\mathbb F.$ Note that
	$d_H(V^u)\leq d$ if and only if there exists a $\mathbb KH$-epimorphism $\phi$ from $(\mathbb KH)^d$ to $V^u$. 
	Since $V^u$ is a completely reducible $\mathbb KH$-module, $\ker \phi$ contains $({\rm J}(\mathbb KH))^d,$  where  ${\rm J}(\mathbb KH)$ denotes the Jacobson radical of $ \mathbb KH.$ Hence $d_H(V^u)\leq d$ if and only if $V^u$ is an epimorphic image of $(\mathbb KH/{\rm J}(\mathbb KH))^d.$ Since the multiplicity of $V$ in $\mathbb KH/{\rm J}(\mathbb KH)$ is precisely $r=\dim_{\mathbb F}V$, we conclude that  $d_H(V^u)\leq d$ if and only if $rd\geq u.$ 
	In particular
	\begin{equation}\label{genmod} d_H(V^u)=1 \text { if } u\leq r \text { and } d_H(V^{r+1})=2.
	\end{equation}
	
	\begin{proof}[Proof of Theorem \ref{uno}]
	Let $H$ be the quaternion group of order 8. For every odd prime $p$, $H$ can be identified with an absolutely irreducible of $\gl(2,p).$ Hence there exists an irreducible $H$-module $V$ of order $p^2$, and $\mathbb F=\End_H(V)$ is the field with $p$ elements. In particular $r=\dim_{\mathbb F}V=2$ and we may consider $G=V^3\rtimes H.$ Let $\mathcal M$ be a 2-covering of $G.$ As we notice before, $\mathcal M$ must contain all the maximal subgroups of $G$ of the first type. Hence
	$\sigma_2(G)\geq \gamma=p^2+p^3+p^4.$
	
Let now $X$ be a 2-generated subgroup of $G$. Then $X=\langle h_1v_1, h_2v_2\rangle$ with $h_1, h_2\in H$ and $v_1,v_2 \in V^3$. We consider the different possibilities for $h_1$ and $h_2.$ 
	
	\noindent a) $\langle h_1, h_2 \rangle =H.$  In this case no maximal subgroup of the second type can contain $X$, and therefore $X\leq M$ for some maximal subgroup of $G$ of the first type.
	
	\noindent b) $\langle h_1, h_2\rangle$ is cyclic of order $m\in \{2,4\}.$
	In this case we may assume $X=\langle hv_1, v_2\rangle$ with $|h|=m$. Since the action of $H$ on $V$ is fixed-point-free, $C_{V^3}(h)=0$ and therefore $hv_1$ is conjugate to $h$ in $G$, Hence  we may assume $X=\langle h, v\rangle^g$ for some $v\in V.$ Moreover it follows from (\ref{genmod}) that there is a maximal $H$-submodule $W$ of $V^{r+1}$ containing $v$ and therefore $X\leq WH^g.$
	
	\noindent c) $X\leq V^3.$ By (\ref{genmod}), there exist
	$v_1, v_2$ generating $V^3$ as an $H$-module. This implies that $\langle v_1, v_2, H^g\rangle=G$ for every $g\in G$ and therefore no maximal subgroup of $G$ of the first type can contain $X$. Hence $\mathcal M$ must contain a maximal subgroup of the second type, and clearly  any such subgroup contains all  the 2-generated subgroups of $V^3.$

	From the previous analysis, it follows that $\sigma_2(G)=\gamma+1=1+p^2+p^3+p^4.$
		\end{proof}
		
		\begin{proof}[Proof of Theorem \ref{due}]
		Let $q$ be a prime power and $p$ a prime which divides $q+1.$ Consider a dihedral group $H$ of order $2p$ and let $\mathbb E$ be the field with $q^2$ elements.
	Let $V$ be the additive group of $\mathbb E$ and $a, b \in H$ of order  $p$ and 2, respectively. We may identify $\langle a\rangle $ with a subgroup of the multiplicative group $\mathbb E^\times$ and we may define a faithful irreducible action of $H$ on $V$ by setting $v^a=va$ and $v^b=v^{q}$ for every $v\in V.$ It turns out that $V$ is $H$-irreducible and that $\End_H(V)$ is the field with $q$ elements. As a consequence, $r=2$. Let $\mathcal M$ be a 2-covering of $G.$ As in the previous proof, $\mathcal M$ must contain all the maximal subgroups of $G$ of the first type and therefore
	$\sigma_2(G)\geq \gamma=q^2+q^3+q^4.$
	
	Let now $X=\langle h_1v_1, h_2v_2\rangle$ be a 2-generated subgroup of $G$, with $h_1, h_2\in H$ and $v_1,v_2 \in V^3$. If $h\in H$ has order $p,$ then $C_V(h)=0.$ Hence,
	with the same arguments as in the previous proof, we may conclude that if $p$ divides the order of $\langle h_1, h_2\rangle$, then $X$ is contained in some maximal subgroup of $G$ of the first type. However, let $h$ be an element of $H$ of order 2. Then there exists $0\neq u \in C_V(h)$.  Moreover, by (\ref{genmod}), there exists $(w_1,w_2)\in V^2$ which generates $V^2$ as an $H$-module.
	Now let $v_1=(u,0,0)$ and $v_2=(0,w_1,w_2)$. Then
	$X=\langle hv_1, v_2\rangle= \langle h, v_1, v_2\rangle$ cannot be contained in a maximal subgroup of $G$ of the first type, since the normal closure of $\langle v_1, v_2\rangle$ in $G$ coincides with $V^3.$
 Hence the unique maximal subgroup of $G$ which contains $X$ is $V^{3}\langle h \rangle.$ This implies that $\mathcal M$ must contain all the  maximal subgroups of $G$ of the form $V^3\langle h \rangle,$ with $|h|=2.$ 
We conclude that $\sigma_2(G)=\gamma+p=q^2+q^3+q^4+p.$
\end{proof}

\end{document}